\documentclass[12pt]{amsart}

\usepackage{amsmath,amsthm,amsfonts,amssymb,mathrsfs}
\usepackage[margin=1in]{geometry}
\usepackage[bookmarks]{hyperref}
\usepackage{verbatim}
\usepackage{todonotes}
\usepackage{tikz}
\usetikzlibrary{matrix,arrows}
\usepackage{fancyvrb }
\usepackage{color}
\usepackage[all,cmtip]{xy}

\newenvironment{enumalph}{\begin{enumerate}  }{\end{enumerate}}

\newtheorem{theorem}{Theorem}[section]

\newtheorem{proposition}[theorem]{Proposition}

\newtheorem{corollary}[theorem]{Corollary}

\newtheorem{lemma}[theorem]{Lemma}
\input xy
\xyoption{all}

\theoremstyle{definition}

\newtheorem{remark}[theorem]{Remark}

\DeclareMathOperator{\rank}{rank}

\DeclareMathOperator{\Ext}{Ext}

\DeclareMathOperator{\opH}{H}
\DeclareMathOperator{\Hom}{Hom}

\DeclareMathOperator{\ind}{ind}
\DeclareMathOperator{\Lie}{Lie}

\newcommand{\calO}{\mathcal{O}}

\newcommand{\spec}{\rm{Spec}}

\newcommand{\frakb}{\mathfrak{b}}
\newcommand{\g}{\mathfrak{g}}

\newcommand{\frakgl}{\mathfrak{gl}}
\newcommand{\fraku}{\mathfrak{u}}

\newcommand{\fraksl}{\mathfrak{sl}}

\newcommand{\N}{\mathcal{N}}

\newcommand{\calm}{\mathcal{M}}

\begin{document}

\title[Rational singularities of $G$-saturation]{Rational singularities of $G$-saturation}

\author{Nham V. Ngo}
\address{Department of Mathematics \\ University of Arizona \\ Tucson \\ AZ 85721, USA}
\email{nhamngo@math.arizona.edu}

\date{\today}

\maketitle

\begin{abstract}
Let $G$ be a semisimple algebraic group  defined over an algebraically closed field of characteristic 0 and $P$ be a parabolic subgroup of $G$. Let $M$ be a $P$-module and $V$ be a $P$-stable closed subvariety of $M$. We show in this paper that if the varieties $V$ and $G\cdot M$ have rational singularities, and the induction functor $R^i\ind_P^G(-)$ satisfies certain vanishing condition then the variety $G\cdot V$ has rational singularities. This generalizes the main result of Kempf in \cite{Kem:76}. As an application, we prove the property of having rational singularities for nilpotent commuting varieties over $3\times 3$ matrices.
\end{abstract}

\section{Introduction}
The study of rational singularities for varieties of dimension higher than two dates back to the 70's when Kempf investigated the geometry of a Riemann's theorem \cite{Kem:73}. One of interesting questions is when the $G$-saturation $G\cdot V$ preserves the property of having rational singularities of $V$. In particular, let $M$ be a $G$-module and $V$ be a closed subvariety of $M$ stabilized by a parabolic subgroup $P$ of $G$. Then Kempf proved in \cite{Kem:76}\cite{Kem:86} that if $P$ acts completely reducible to $V$ and $V$ has rational singularities then $G\cdot V$ has rational singularities. It is known that completely reducible actions rarely occur, so the usage of Kempf's result is rather restricted. We prove in the present paper that this condition can be relaxed, see Theorem \ref{maintheorem}. As an application, we show that nilpotent commuting varieties over $3\times 3$ matrices  have rational singularities. Our interest in nilpotent commuting varieties was motivated by their connection to the cohomology of Frobenius kernels of $G$ in positive characteristic, see \cite{N:2015}.


The paper is organized as follows. Section \ref{notation} provides necessary notation and background. The main result is shown in Section \ref{existence of equivariant rational resolution}. From Section \ref{vanishing} to the end of the paper, we assume $G=SL_3$. Before showing our applications, we prove in Section \ref{vanishing} a vanishing result of higher induction $R^i\ind_B^G(-)$ for certain modules. Computations in this section extend a recent result on the null-cone of Vilonen and Xue \cite{VX}. Next, in Section \ref{normality}, these vanishing results are applied to prove the rational singularities of the nilpotent commuting varieties $C_r(\N)$ and some related varieties.

\section{Notation}\label{notation}

\subsection{Algebraic groups and Lie algebras}\label{combinatorics} Let $k$ be an algebraically closed field of characteristic $0$. Let $G$ be a semisimple algebraic group defined over $k$, unless otherwise stated. Fix a maximal torus $T \subset G$, and let $\Phi$ be the root system of $T$ in $G$. Let $\Phi^+$ be the corresponding set of positive roots. Let $B$ be the Borel subgroup of $G$ containing $T$ and corresponding to the set of negative roots $\Phi^-$, and let $U$ be the unipotent radical of $B$. Set $\g = \Lie(G)$, the Lie algebra of $G$, $\frakb = \Lie(B)$, $\fraku = \Lie(U)$. 

Given a vector space $V$, we denote by $S^n(V)$ and $\Lambda^n(V)$ the symmetric and exterior space of degree $n$ over $V$. Then the direct sums 
\[ S(V)=\bigoplus_{n=0}^{\infty}S^n(V)\quad,\quad \Lambda(V)=\bigoplus_{n=0}^{\infty}\Lambda^n(V) \]
denote the symmetric algebra and exterior algebra of $V$. Define $V^*=\Hom_k(V,k)$ the dual space of $V$. Throughout this paper, tensor products will be taken over $k$. Assume for the rest of the paper that every $G$-module is a rational module over $G$.

\subsection{Induction functor} Let $M$ be a $P$-module where $P$ is a parabolic subgroup of $G$. Then the induced $G$-module can be defined as 
\[ \ind_P^GM=(k[G]\otimes M)^P. \]
The higher derived functor of $\ind_P^G(-)$ is denoted by $R^i\ind_P^G(-)$. Note that the definition of induction is not restricted to parabolic subgroups, we refer the reader to \cite[Chapter I.3]{Jan:2003} for further details. 

\subsection{Adjoint action} The group $G$ acts on the Lie algebra $\g$ via the adjoint action denoted by ``$\cdot$", and called  $G$-action. Note that the nilpotent cone $\N$ of $\g$ is stable under this $G$-action and $\frakb$, $\fraku$ are stable under the $B$-action, the restriction of $G$-action to $B$. For every positive integer $r$, the $G$-action on the direct product $\g^r$ is defined diagonally, i.e., \[ g\cdot(x_1,\ldots,x_r)=(g\cdot x_1,\ldots, g\cdot x_r)\] for all $g\in G$ and $x_i\in\g$. It also restricts to the $B$-action on $\frakb^r$ and $\fraku^r$. 

Let $X$ be a variety and $H$ be a connected algebraic group. Then $X$ is called an $H$-variety if it is stable under the $H$-action. This action induces an action on the coordinate algebra $k[X]$ so we call it an $H$-algebra. A morphism between two $H$-varieties $f:X\to Y$ is called {$H$-equivariant} if it commutes with the $H$-actions on both varieties. For instance, the moment map $m:G\times^B\fraku^r\to G\cdot\fraku^r$ defined by $(g,x_1,\ldots,x_r)\mapsto(g\cdot x_1,\ldots,g\cdot x_r)$ for all $g\in G, x_i\in\fraku$, is $G$-equivariant.     

\subsection{Basic algebraic geometry conventions}\label{algebraic geometry conventions} Let $X$ be a variety. We write $k[X]$ for the ring of global sections $\calO_X(X)$ on $X$. In case $X$ is affine, it coincides with the coordinate algebra of $X$.

 For each variety $X$, a morphism $\pi:\tilde{X}\to X$ is called a {\it resolution of singularites} if the variety $\tilde{X}$ is non-singular and $\pi$ is proper and birational. If in addition $X$ is normal and the higher direct image of $\pi$ vanishes, i.e., $R^i\pi_*\calO_{\tilde{X}}=0$ for all $i>0$, then we call $\pi$ a {\it rational resolution} and say that $X$ has rational singularities. Note that this vanishing condition is equivalent to $\opH^i(\tilde{X},\calO_{\tilde{X}})=0$ for all $i>0$ when $X$ is affine, \cite[Proposition III.8.5]{Ha:1977}. This notion can also be applied to a commutative ring $R$ if we replace $X$ by $\spec(R)$. Suppose further that $\pi$ is $H$-equivariant, then the resolution above is called $H$-equivariant resolution of singularities (resp. $H$-equivariant rational resolution). The below proposition about the existence of equivariant rational resolution of an $H$-variety should be well-known, however, we have not seen it in literature. 

\begin{proposition}\label{equiv resolution}
Let $H$ be a connected algebraic group and $X$ be an $H$-variety. If $X$ has rational singularities, then there exists a $H$-equivariant rational resolution of $X$.
\end{proposition}

\begin{proof}
First note that $X$ has an $H$-equivariant resolution of singularities, namely $\pi:\tilde{X}\to X$, see for example \cite[Proposition 3.9.1]{Ko}. On the other hand, the rational singularities of $X$ and Remark iv (or Lemma 1) in \cite{V:1977} imply that $\pi$ must carry the property of being rational singularities. 
\end{proof}

Next let $P$ be a parabolic subgroup of $G$. The associated bundle of a $P$-variety $X$ over $G/P$ is denoted by $G\times^PX$. It is known that the ring of global sections on $G\times^PX$ coincides with the ring of $P$-invariant global sections on $G\times X$. In particular, we have 
\[ k[G\times^PX]\cong k[G\times X]^P\cong (k[G]\otimes k[X])^P=\ind_P^Gk[X]. \]
Furthermore, we have for all $i\ge 0$ \[ \opH^i(G\times^P X,\calO_{G\times^P X})\cong R^i\ind_P^G(k[X]), \]
where the left-hand side is the sheaf cohomology of the scheme $G\times^PX$.

\subsection{Determinantal varieties}
Consider an $m\times n$ matrix
\[ \calm=\left( \begin{array}{ccc}
x_{11} & \cdots & x_{1n} \\
\vdots & \ddots & \vdots \\
x_{m1} & \cdots & x_{mn} \end{array} \right) \]
whose entries are independent indeterminates over the field $k$. Let 
\[k[\calm]:=k[x_{ij}:1\le i\le m, 1\le j\le n],\]
and let $I_t(\calm)$ be the ideal in $k[\calm]$ generated by all $t\times t$ minors of $\calm$. For each $t\ge 1$, the ring 
\[ R_t(\calm)=\frac{k[\calm]}{I_t(\calm)} \]
is called a {\it determinantal ring}. We denote by $D_t(\calm)$ the determinantal variety defined by $I_t(\calm)$. These rings (or varieties) are well-known in commutative algebra. To be convenient, we state some of their nice properties.
 
\begin{proposition}\cite[2.13, 11.23]{BV:1988}\label{commutative algebra}
For every $1\le t\le\min(m,n)$, the ring $R_t(\calm)$ (or the variety $D_t(\calm)$) is a reduced, Cohen-Macaulay, normal domain of dimension $(t-1)(m+n-t+1)$. Furthermore, it has rational singularities.
\end{proposition}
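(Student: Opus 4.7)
The plan is to apply Kempf's classical desingularization technique: construct an explicit resolution $\pi:Z\to X_t$ of the determinantal variety $X_t\subseteq\A^{mn}$ of $m\times n$ matrices of rank at most $t-1$, and deduce each of the four properties---reducedness (i.e.\ $I_t(\calm)$ is radical), normality, Cohen-Macaulayness, and rational singularities---from the geometry of $\pi$. The dimension formula will fall out from a dimension count on $Z$.

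First I would take $Z$ to be the incidence variety
$$Z=\{(M,V)\in\A^{mn}\times\mathrm{Gr}(n-t+1,n)\ :\ M|_V=0\},$$
and observe that the second projection exhibits $Z$ as the total space of a vector subbundle of the trivial rank-$mn$ bundle on $\mathrm{Gr}(n-t+1,n)$, with fiber $\Hom(k^n/V,k^m)$ of rank $m(t-1)$ over $V$. Thus $Z$ is smooth and irreducible of dimension $m(t-1)+(t-1)(n-t+1)=(t-1)(m+n-t+1)$. The first projection $\pi:Z\to X_t$ is proper (via the Grassmannian), and it is an isomorphism over the open locus of matrices of rank exactly $t-1$, where the kernel is uniquely determined; hence $\pi$ is birational and $X_t$ is irreducible of the claimed dimension.

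The core step is to prove $\pi_*\calO_Z=\calO_{X_t}$ and $R^i\pi_*\calO_Z=0$ for all $i>0$. Via the Grassmannian projection this reduces to computing the cohomology on $\mathrm{Gr}(n-t+1,n)$ of symmetric powers of a homogeneous bundle built from the tautological quotient and $k^m$; in characteristic zero, Borel-Weil-Bott furnishes the required higher vanishing and identifies the global sections with $k[X_t]$. The vanishing together with smoothness of $Z$ is exactly the definition of rational singularities. The identification $\pi_*\calO_Z=\calO_{X_t}$ together with smoothness of $Z$ forces $X_t$ to be normal, hence (since $X_t$ is also irreducible) an integral domain, and in particular $I_t(\calm)$ is prime. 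Cohen-Macaulayness in characteristic zero is then automatic from rational singularities (Kempf's theorem, or Grauert-Riemenschneider).

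The main obstacle is the Bott-type cohomology vanishing on the Grassmannian: one must identify the direct summands of the relevant pushforward as explicit irreducible homogeneous bundles and check that each contributes zero in positive degrees, and match the $0$th cohomology against the expected coordinate ring. In positive characteristic, Bott vanishing can fail, which is why Bruns-Vetter in \cite{BV:1988} instead use the characteristic-free machinery of algebras with straightening law and standard-monomial theory; since we work in characteristic zero throughout this paper, the geometric route above is the most direct.
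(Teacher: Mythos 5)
The paper offers no proof of this proposition: it is a black-box citation to Bruns--Vetter \cite{BV:1988}, whose argument runs through the characteristic-free machinery of algebras with straightening law and standard-monomial theory. Your proposal instead carries out Kempf's geometric desingularization via the incidence variety $Z$ over $\mathrm{Gr}(n-t+1,n)$, which is a genuinely different, classical route that is specific to characteristic zero. You are aware of this difference and say so explicitly, which is good. Your dimension count and the identification of $Z$ as the total space of the bundle $\mathcal{Q}^{*}\otimes k^{m}$ (with $\mathcal{Q}$ the tautological quotient, rank $t-1$) are correct, and properness/birationality of $\pi$ is handled properly by exhibiting the section $M\mapsto(M,\ker M)$ over the rank-exactly-$(t-1)$ locus. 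The deductions normal $\Rightarrow$ domain $\Rightarrow$ prime, and rational singularities $\Rightarrow$ Cohen--Macaulay (Kempf/Elkik), are all sound.

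The one place where you wave your hands is precisely the place you flag as the main obstacle, so I will make it sharper: the statement ``$\pi_{*}\calO_{Z}=\calO_{X_{t}}$'' is not a consequence of Bott vanishing alone, because a priori $\pi$ only factors through the reduced image $(X_{t})_{\red}$, and it is exactly the radicality of $I_{t}(\calm)$ that is at stake. What Bott's algorithm on $\mathrm{Gr}(n-t+1,n)$ gives you is $R^{i}\pi_{*}\calO_{Z}=0$ for $i>0$ and
\[
\opH^{0}(Z,\calO_{Z})=\bigoplus_{\ell(\lambda)\le t-1} S_{\lambda}(k^{n})\otimes S_{\lambda}\bigl((k^{m})^{*}\bigr),
\]
via Cauchy's formula $S^{\bullet}(\mathcal{Q}\otimes (k^{m})^{*})=\bigoplus_{\lambda}S_{\lambda}(\mathcal{Q})\otimes S_{\lambda}((k^{m})^{*})$. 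To conclude, you must separately verify that the kernel of $k[\calm]\to\opH^{0}(Z,\calO_{Z})$ is exactly $I_{t}(\calm)$; the containment $I_{t}\subseteq\ker$ is clear, and the reverse containment requires knowing that the $\mathrm{GL}_{m}\times\mathrm{GL}_{n}$-stable ideal generated by $S_{(1^{t})}(k^{n})\otimes S_{(1^{t})}((k^{m})^{*})$ (the span of the $t\times t$ minors) already picks up every summand $S_{\lambda}\otimes S_{\lambda}$ with $\ell(\lambda)\ge t$. This is the De Concini--Eisenbud--Procesi decomposition and is a real lemma, not a formality. Once that is inserted, the whole chain (radicality, normality, domain, dimension, CM, rational singularities) closes cleanly, so your plan is correct as a proof strategy; you would simply need to cite or supply that representation-theoretic input to make it a complete argument.
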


\section{Equivariant rational resolution}\label{existence of equivariant rational resolution}
We prove in this section the main result of the paper. Recall that $G$ is a connected semisimple algebraic group. The argument is a combination of techniques in \cite[\S 5]{KLT:1999} and ingredients in \cite{Br:2003}.

\begin{theorem}\label{maintheorem}
Let $V$ be a $P$-subvariety of a $P$-module $M$ contained in a $G$-module. Let $I(V)$ be the defining ideal of $V$ in $k[M]=S(M^*)$. Assume that 
\[ m:G\times^PM\to G\cdot M \]
is a rational resolution of $G\cdot M$. If $V$ is normal and for all $i\ge 1$
\[ R^i\ind_P^GI(V)=0,\]
then $G\cdot V$ is normal. 

Furthermore, suppose that $V$ has rational singularities. If the map $m':G\times^PV\to G\cdot V$, the restriction of $m$,  is a proper birational map, then the variety $G\cdot V$ has rational singularities.
\end{theorem}

The argument is split up to several steps as follows. 

\begin{lemma}\label{lemma 1}
Let $q$ be the embedding of $V$ into $M$ which induces a surjective homomorphism of $P$-algebras $q^*:S(M^*)\to k[V]$. Then the map
\[ \phi:=\ind_P^G(q^*):\ind_P^GS(M^*)\to \ind_P^Gk[V] \]
is a surjective $G$-equivariant homomorphism of algebras.
\end{lemma}

\begin{proof}
Note first that \[ k[V]\cong\frac{S(M^*)}{I(V)}\]
are isomorphisms of $P$-algebras. We then have the following short exact sequence of $P$-modules
\begin{align}\label{short exact sequence}
 0\to I(V)\to  S(M^*)\stackrel{q^*}{\longrightarrow} k[V]\to 0.
\end{align}
Since we have for all $i\ge 1$ 
\begin{align*}
R^i\ind_P^G\left(I(V)\right)=0,
\end{align*}
the long exact sequence when applying the induction functor to the short exact sequence \eqref{short exact sequence} deduces to the short exact sequence of $G$-modules
\begin{align*}
 0\to\ind_P^GI(V)\to\ind_P^GS(M^*)\stackrel{\phi}{\longrightarrow} \ind_P^G(k[V])\to 0,
\end{align*}
which implies the surjectivity of $\phi$.
\end{proof}

We now prove the first statement in the theorem.

\begin{lemma}\label{normal}
The variety $G\cdot V$ is normal.
\end{lemma}

\begin{proof}
As $V$ is normal, the ring \[\ind_P^Gk[V]\cong k[G\times^PV]\cong k[G\times V]^P\] is also normal. Hence, it suffices to show that the map
\begin{align*}
m'^*: k[G\cdot V]\to k[G\times^P V]
\end{align*}
is an isomorphism. Clearly, it is injective. To show the surjectivity of $m'^*$, we consider the commutative diagram of $G$-equivariant morphisms
\[
\xymatrix{
G\times^PV \ar@{->}[r]^-{m'} \ar@{->}[d]_{\overline{q}} & G\cdot V \ar@{->}[d]^{e} \\
G\times^PM \ar@{->}[r]^-{m} & G\cdot M
}
\]
where $\overline{q}$ is induced from the embedding $V\hookrightarrow M$, and $e$ is the embedding from $G\cdot V$ into $G\cdot M$. It follows the commutative diagram of $G$-algebras
\[
\xymatrix{
\ind_P^Gk[V]\quad\quad \cong & k[G\times^PV] & \ar@{->}[l]_--{m'^*} k[G\cdot V]  \\
\ar@{->>}[u]^{\phi}\ind_P^GS(M^*)\quad\quad \cong & \ar@{->}[u]^{\overline{q}^*}k[G\times^PM] & \ar@{->}[l]_-{m^*} k[G\cdot M].\ar@{->}[u]^{e^*}
}
\]
We have, by Lemma \ref{lemma 1}, $\phi$ is onto. Also, $m^*$ is an isomorphism since $m:G\times^PM\to G\cdot M$ is a rational resolution. Therefore, $m'^*$ is surjective, which proves our lemma.
\end{proof}

Before completing the proof, we need to set up a few things. By Proposition \ref{equiv resolution}, $V$ has a $P$-equivariant rational resolution, namely $\pi:\tilde{V}\to V$. This morphism can be extended to the birational map $\tilde{\pi}:G\times^P\tilde{V}\to G\times^P{V}$ by setting $(g,v)\mapsto(g,\pi(v))$ for all $g\in G, v\in V$. Then composing with the map $m':G\times^P V\to G\cdot V$, we have a $G$-equivariant resolution of singularities
\begin{align}\label{GV_resolution}
 m'\circ\tilde{\pi}:G\times^P\tilde{V}\to G\cdot V. 
\end{align}

{\bf Proof of Theorem \ref{maintheorem}.}

We are showing that \ref{GV_resolution} is a rational resolution of $G\cdot V$. By Lemma \ref{normal}, we only need to show that 
\[ R^i(m'\circ\tilde{\pi})_*\calO_{G\times^P\tilde{V}}=0\]
for all $i\ge 1$. Using \cite[Proposition III.8.5]{Ha:1977} and \cite[Lemma 2]{Br:2003}, we have, for all $i\ge 1$, the following
\begin{align*} 
R^i(m'\circ\tilde{\pi})_*\calO_{G\times^P\tilde{V}} &\cong\opH^i(G\times^P\tilde{V},\calO_{G\times^P\tilde{V}})^{\sim}\\
&\cong \left(\Ext^i_{G\times^P\tilde{V}}(\calO_{G\times^P\tilde{V}},\calO_{G\times^P\tilde{V}})\right)^\sim\\
&\cong\left(\Ext^i_{G\times^P\tilde{V}}(G\times^P\calO_{\tilde{V}}, G\times^P\calO_{\tilde{V}})\right)^\sim\\
&\cong \left(G\times^P\Ext^i_{\tilde{V}}(\calO_{\tilde{V}}, \calO_{\tilde{V}})\right)^\sim\\
&\cong \left(G\times^P\opH^i(\tilde{V},\calO_{\tilde{V}})\right)^\sim\\
&=0
\end{align*}
since $\opH^i(\tilde{V},\calO_{\tilde{V}})^{\sim}\cong R^i\pi_*\calO_{\tilde{V}}=0$ for all $i\ge 1$ (by the rational resolution $\pi:\tilde{V}\to V$). Hence, the theorem is proved.

Now we are able to retrieve the main result of Kempf in \cite{Kem:76}.

\begin{corollary}
Let $N$ be a $P$-module contained in a $G$-module $M$. Suppose $P$ acts on $N$ completely reducible. Then $G\cdot N$ is normal. If, in addition, $G\times^PN\to G\cdot N$ is a resolution of singularities, then $G\cdot N$ has rational singularities.
\end{corollary}

\begin{proof}
The argument is straightforward from Theorem \ref{maintheorem} with $V=N$. Note that $G\cdot M=M$ a smooth variety, and the completely reducible action of $P$ on $N$ implies the vanishing condition in the theorem. Hence the result follows.  
\end{proof}

\section{Vanishing of induction functor}\label{vanishing}
Before showing applications of the main theorem in the previous section, we prove some vanishing results for induction functor in  certain cases. The calculations in this section are of independent interest as they extend a result in \cite{VX}. The strategy is making use of Koszul resolutions repeatedly for various vector spaces. 

We assume for the rest of the paper that $G=SL_3$, unless otherwise stated. Let $\Phi^+=\{\alpha, \beta, \alpha+\beta\}$ be the set of positive roots of the root system $\Phi$ of $G$. Denote by $X(T)$ the weight lattice of $T$. We further denote by
\[ X(T)^+=\{\lambda\in X(T): (\lambda,\gamma^\vee)\ge 0~\text{for~all~} \gamma\in\Phi^+ \}, \]
the set of dominant weights in $X(T)$.

We first note that Vilonen and Xue have recently showed in \cite{VX} that for all $i, r\ge 1$
\begin{align}\label{VX vanishing}
\opH^i(G\times^B\fraku^r,\calO_{G\times^B\fraku^r})\cong R^i\ind_B^GS(\fraku^{*r})=0.
\end{align}
It follows that the resolution 
\begin{align*}
G\times^B\fraku^r &\to G\cdot\fraku^r,\\
(g,x_1,\cdots,x_r)&\mapsto (g\cdot x_1,\cdots,g\cdot x_r)
\end{align*}
is a ($G$-equivariant) rational resolution.

Now for $\gamma$ either $\alpha$ or $\beta$, we let \[ \mathcal{A}_{\gamma}=X^+\cup \{\mu\in X(T): (\mu,\gamma^\vee)=-1\text{~and~} \mu+\gamma\in X(T)^+ \}.\]
For each simple root, say $\alpha$, we denote $\fraku_{\alpha}$ Lie algebra of the unipotent radical of the parabolic subgroup of $G$ generated by $\{\alpha\}$. It then follows that 
\[ 0\to \fraku_{\alpha}\to\fraku\to -\alpha\to 0. \]
To be convenient, we will write $\fraku^{*r}$ instead of $(\fraku^*)^r$, and write $\opH^i(M)$ instead of $R^i\ind_B^G(M)$ for all $i\ge 0$. We first prove some lemmas.

\begin{lemma}\label{simple lemma}
For $r\ge 0, i\ge 1$ and $\mu\in\mathcal{A}_{\gamma}$, \[ \opH^i\big(S(\fraku_{\gamma}^{*r})\otimes\mu\big)=0.\]
\end{lemma}

\begin{proof}
By symmetry, it suffices to prove the lemma for $\gamma=\alpha$. If $(\lambda,\alpha^\vee)=-1$, then we have the vanishing of $\opH^i\big(S(\fraku_{\alpha}^{*r})\otimes\mu\big)$ for all $i\ge 0$ by a lemma in \cite[3.2]{KLT:1999}. Hence, we assume that $\mu$ is dominant. We prove by induction on $r$. The statement clearly holds for $r=0$. Suppose it holds for $r-1$ for some positive integer $r$. We consider
\[  0\to\fraku_{\alpha}^{*}\to\fraku_{\alpha}^{*r}\to\fraku_{\alpha}^{*(r-1)}\to 0.\]
Tensoring the Koszul resolution of this short exact sequence with $\mu$, we get 
\begin{align}\label{Koszul1}
  0\to S^{n-2}\fraku_{\alpha}^{*r}\otimes\Lambda^2(\fraku_{\alpha})\otimes\mu\to S^{n-1}\fraku_{\alpha}^{*r}\otimes\fraku_{\alpha}\otimes\mu\to S^{n}\fraku_{\alpha}^{*r}\otimes\mu\to S^n\fraku_{\alpha}^{*(r-1)}\otimes\mu\to 0
\end{align}
for all $n\ge 0$. Observe that $\mu+\beta, \mu+(\alpha+\beta),$ and $\mu+(\alpha+2\beta)$ are in $\mathcal{A}_{\alpha}$. Induction on $n$ then implies that for all $i\ge 1$
\[ \opH^i( S^{n-2}\fraku_{\alpha}^{*r}\otimes\Lambda^2(\fraku^*_{\alpha})\otimes\mu)=\opH^i(S^{n-1}\fraku_{\alpha}^{*r}\otimes\fraku^*_{\alpha}\otimes\mu)=0.\]
Now breaking up \eqref{Koszul1} into short exact sequences and applying inductive hypotheses, we obtain $\opH^i(S^{n}\fraku_{\alpha}^{*r}\otimes\mu)=0$ for all $i\ge 1$, which inductively proves our lemma.
\end{proof}
We further extend the above result as follows.

\begin{theorem}\label{alpha}
For all $i\ge 1, r, s\ge 0$, and $\lambda\in\mathcal{A}_{\gamma}$,
\[ \opH^i\big(S(\fraku^{*r}\times\fraku_{\gamma}^{*s})\otimes\lambda\big) =0. \]
\end{theorem}

\begin{proof}
Again, we only have to argue for the case when $\gamma=\alpha$. By Lemma \ref{simple lemma}, the theorem holds for $r=0$. Suppose it holds for $r-1$ and all $s\ge 0$. Proceeding inductively for $n$, we only need to prove that for all $i\ge 1$
\begin{align}\label{r,s alpha}
\opH^i\big(S^n(\fraku^{*r}\times\fraku_{\alpha}^{*s})\otimes\lambda\big) =0.
\end{align}
Assume that $(\lambda,\alpha^\vee)=-1$. Then consider the short exact sequence
\[ 0\to\alpha\to\fraku^{*r}\times\fraku_{\alpha}^{*s}\to\fraku^{*(r-1)}\times\fraku_{\alpha}^{*(s+1)}\to 0.\]
Tensoring the Koszul resolution of this short exact sequence with $\lambda$, we get 
\begin{align*}
  0\to S^{n-1}(\fraku_{\alpha}^{*r}\times\fraku_{\alpha}^{*s})\otimes(\alpha+\lambda)\to S^{n}(\fraku_{\alpha}^{*r}\times\fraku_{\alpha}^{*s})\otimes\lambda\to S^{n}(\fraku_{\alpha}^{*(r-1)}\times\fraku_{\alpha}^{*(s+1)})\otimes\lambda\to 0.
\end{align*}
By inductive hypotheses, the vanishing of $\opH^i(S(\fraku_{\alpha}^{*r}\times\fraku_{\alpha}^{*s})\otimes(\alpha+\lambda))$ implies that of $\opH^i(S(\fraku_{\alpha}^{*r}\times\fraku_{\alpha}^{*s})\otimes \lambda)$ for all $i\ge 1$. Hence, we only have to verify \eqref{r,s alpha} for $\lambda\in X^+$. We consider
\[  0\to\fraku_{\alpha}^{*}\to\fraku^{*r}\times\fraku_{\alpha}^{*s}\to\fraku^{*r}\times\fraku_{\alpha}^{*(s-1)}\to 0.\]
Tensoring the Koszul resolution of this short exact sequence with $\lambda$, we get for all $n\ge 0$
\begin{align}\label{Koszul2}
  0\to S^{n-2}(\fraku^{*r}\times\fraku_{\alpha}^{*s})\otimes\Lambda^2(\fraku^*_{\alpha})\otimes\lambda\to S^{n-1}(\fraku^{*r}\times\fraku_{\alpha}^{*s})\otimes\fraku^*_{\alpha}\otimes\lambda\to \\
\to S^{n}(\fraku^{*r}\times\fraku_{\alpha}^{*s})\otimes\lambda\to S^n(\fraku^{*r}\times\fraku_{\alpha}^{*(s-1)})\otimes\lambda\to 0. \nonumber
\end{align}
As we are assuming $\lambda$ is dominant, $\lambda+\beta, \lambda+(\alpha+\beta),$ and $\lambda+(\alpha+2\beta)$ are in $\mathcal{A}_{\gamma}$. Induction on $n$ then implies that for all $i\ge 1$
\[ \opH^i( S^{n-2}(\fraku^{*r}\times\fraku_{\alpha}^{*s})\otimes\Lambda^2(\fraku^*_{\alpha})\otimes\lambda)=\opH^i(S^{n-1}(\fraku^{*r}\times\fraku_{\alpha}^{*s})\otimes\fraku^*_{\alpha}\otimes\lambda)=0.\]
Now breaking up \eqref{Koszul2} into short exact sequences and applying inductive hypotheses, we obtain $\opH^i(S^{n}(\fraku^{*r}\times\fraku_{\alpha}^{*s})\otimes\lambda)=0$ for all $i\ge 1$, which inductively proves our lemma.
\end{proof}


Setting $s=0$ in the above theorem, we obtain the following

\begin{corollary}\label{lambda in A}
For all $i\ge 1, r\ge 0$ and $\lambda\in\mathcal{A}_{\gamma}$, we have
\[ \opH^i(S(\fraku^{*r})\otimes\lambda)=0. \]
\end{corollary}

\begin{remark}
Our vanishing results in this section holds for all characteristics greater than 3, see \cite[Remark 6.2]{VX}. Note also that analogous vanishings for $S(\frakb^{*r})$ do not hold for $r>1$, see counter-examples in \cite[\S 5.2]{VX}.  
\end{remark}

\section{Nilpotent commuting varieties}\label{normality}

Let $\g$ be a Lie algebra defined over $k$ and $X$ be a closed subvariety of $\g$. For each $r\ge 1$, the commuting variety (of $r$-tuples) over $X$ is defined by
\[ C_r(X)=\{(x_1,\ldots,x_r)\in X^r~|~[x_i,x_j]=0 \} \]
for $r\ge 2$ and $C_1(X)=X$. When $V=\N$, we call $C_r(\N)$ the {\it nilpotent commuting variety}. The study of nilpotent commuting varieties was started not long ago. The pioneering work of Premet \cite{Pr:2003} showed that $C_2(\N)$ has pure dimension\footnote{This means all irreducible components are of the same dimension.} $\dim\g$. In \cite{N:2015}, I could show that the result does not hold for arbitrary $r$. In joint work with \v{S}ivic \cite{NS:2014}, we determined the (ir)reducibility of the variety $C_r(\N)$ with $\g=\fraksl_n$ for various values of $n$ and $r$. Explicitly, it is reducible for all $n, r\ge 4$. Moreover, for $r=3$, it is irreducible for all $n\le 6$.

We keep assuming in this section that $G=SL_3$ defined over $k$. Recall from \cite{N:2012} that various commuting varieties over $2\times 2$ matrices were proved to be Cohen-Macaulay and have rational singularities. While these properties follow easily from determinantal varieties for $C_r(\fraksl_2)$ and $C_r(\frakgl_2)$, the proof for the nilpotent commuting varieties requires deep methods in commutative algebra due to the difficulty in computing their defining ideals. Note also that in \cite[Section 7]{N:2012}, the author showed that the singular locus of $C_r(\N)$ is of codimension 2 which is a strong evidence for the normality of $C_r(\N)$. In this section, we verify the normality of $C_r(\N)$ and prove further that it has rational singularities.

Note that for each $r\ge 1$ we have $C_r(\N)=G\cdot C_r(\fraku)$ and the moment map $m:G\times^B C_r(\fraku)\to C_r(\N)$ is proper birational map, see \cite[Proposition 3.4.3]{N:2012}. We first analyze some properties of $C_r(\fraku)$. First, let $f_{\alpha}, f_{\beta},$ and $f_{\alpha+\beta}$ be root vectors in $\fraku$ corresponding to weights $-\alpha, -\beta,$ and $-\alpha-\beta$. Then each element $v$ in $\fraku$ can be written as 
\[ v=af_{\alpha}+bf_{\beta}+cf_{\alpha+\beta} \]
for some $a, b, c\in k$. Now suppose $(v_i)=\big( a_if_{\alpha}+b_if_{\beta}+c_if_{\alpha+\beta}: 1\le i\le r\big)$, an $r$-tuple in $\fraku^r$. Analyzing the commutator $[v_i,v_j]$ for all $i\ne j$, we get
\[ (v_i)\in C_r(\fraku)\quad \Longleftrightarrow\quad a_ib_j-a_jb_i = 0. \]
These equations are exactly all $2\times 2$ minors of the matrix
\[ \mathcal M=\left( 
\begin{array}{ccc}
a_1 & \cdots & a_r\\
b_1 & \cdots & b_r
\end{array}
\right). \]
It follows that $C_r(\fraku)$ is a product of an affine space and determinantal variety $D_2(\mathcal M)$. Therefore, from Proposition \ref{commutative algebra} we have obtained the following

\begin{proposition}\label{C_r(u)}
For all $r\ge 1$, we have
\begin{enumalph}
\item $C_r(\fraku)$ has rational singularities,
\item the defining ideal of $C_r(\fraku)$ in $S(\fraku^{*r})$ is $I(C_r(\fraku))=\left<f_{i,j}: 1\le i\ne j\le r\right>$ where all $f_{ij}=a_ib_j-a_jb_i\in S(\fraku^{*r})$  are of weight $\alpha+\beta$. Moreover, since $U$ acts trivially on $I(C_r(\fraku))$, we have
\[ I(C_r(\fraku))\cong\bigoplus_{1\le i\ne j\le r}(\alpha+\beta)\otimes S(\fraku^{*r}) \] 
as a $B$-module.
\end{enumalph}
\end{proposition}

Here comes the main result of this subsection.

\begin{theorem}
For all $r\ge 1$, the nilpotent commuting variety $C_r(\N)$ has rational singularities. Consequently, it is Cohen-Macaulay.
\end{theorem}
\begin{proof}
Note first that $C_r(\fraku)$ is a $B$-subvariety of $\fraku^r$. Moreover, $G\cdot\fraku^r$ is shown to have rational singularities in \cite[6.1]{VX}. Hence, our theorem follows immediately from Proposition \ref{equiv resolution}, Theorem \ref{maintheorem}, Corollary \ref{lambda in A}, and Proposition \ref{C_r(u)}.
\end{proof}

\begin{remark}
As pointed out earlier, the result can not be extended further for higher rank groups for all $r$ as it was shown by \v{S}ivic and the author \cite{NS:2014} that $C_r(\N)$ is reducible for all $r\ge 4$ and $\rank(G)\ge 3$ for type $A$ (see also \cite{N:2015} for other classical types).
\end{remark}

The above result can be strengthened as follows.

\begin{theorem}
Let $V$ be a $B$-subvariety of $\fraku^r$ for some $r\ge 1$ whose defining ideal $I(V)$ contains polynomials of weight $\alpha+\beta$. If $V$ has rational singularities and $G\times^BV\to G\cdot V$ is a proper birational map, then $G\cdot V$ has rational singularities.
\end{theorem}

\begin{proof}
Recall from the last section that $G\times^B\fraku^r\to G\cdot\fraku^r$ is a rational resolution. The assertion then follows from Theorem \ref{maintheorem} if we show that $R^i\ind_B^GI(V)=0$ for all $i\ge 1$. This can be done by the same argument as in Proposition \ref{C_r(u)}. Indeed, set $I(V)=\left<f_1,\ldots,f_s \right>$ with all $f_i$'s are of weight $\alpha+\beta$. This implies that $U$ acts trivially on $I(V)$ so that as a $B$-module
\[ I(V)\cong\bigoplus_{i=1}^sf_i\otimes S(\fraku^{*r})\cong\bigoplus_{i=1}^s(\alpha+\beta)\otimes S(\fraku^{*r}). \]
Now Corollary \ref{lambda in A} gives us the desired vanishing of $R^i\ind_B^GI(V)$.
\end{proof}



\section*{Acknowledgments}
The author would like to thank Eric Sommers for discussions about using Koszul resolution and thank Jean-Yves Charbonnel for informing us about work of Vilonen and Xue.

\providecommand{\bysame}{\leavevmode\hbox to3em{\hrulefill}\thinspace}


\begin{thebibliography}{BNP}

\bibitem[Br]{Br:2003}
M.~Brion, {\em Multiplicity-free subvarieties of flag varieties}, Contemp. Math., Amer. Math. Soc, \textbf{331} (2003), 13--23.


\bibitem[BV]{BV:1988}
W.~Bruns and U.~Vetter, {\em Determinantal Rings}, Lectures Notes in Math., Springer-Verlag, 1988.




\bibitem[Ha]{Ha:1977}
R.~ Hartshorne, {\em Algebraic geometry}, Graduate Texts in Mathematics \textbf{52}, Springer-Verlag, 1977.


\bibitem[Jan]{Jan:2003}
J.~C. Jantzen, {\em Representations of algebraic groups}, second ed., Mathematical Surveys and Monographs, \textbf{107}, American Mathematical Society, Providence, RI, 2003.



\bibitem[K1]{Kem:73}
G. Kempf, {\em On the geometry of a theorem of Riemann}, Ann. Math., \textbf{98} (1973), 178--185.

\bibitem[K2]{Kem:76}
\bysame, {\em On the collapsing of homogeneous vector bundles}, Invent. Math., \textbf{37} (1976), 229--239.

\bibitem[K3]{Kem:86}
\bysame, {\em Varieties with rational singularities}, Contemp. Math., \textbf{58} (1986), 179--182. 

\bibitem[Ko]{Ko}
J.~Kollar, {\em Lectures on Resolution of Singularities}, Annals of Mathematics Studies. \textbf{166}, Princeton University Press, Princeton, NJ, 2007.

\bibitem[KLT]{KLT:1999}
S.~Kumar, N.~Lauritzen, and J.~F. Thomsen, {\em Frobenius splitting of
  cotangent bundles of flag varieties}, Invent. Math. \textbf{136} (1999), 603--621.


\bibitem[N1]{N:2012}
N. V.~Ngo, {\em Commuting varieties of $r$-tuples over Lie algebras}, J. Pure and Appl. Alg., \textbf{218} (2014), 1400--1417.

\bibitem[N2]{N:2015}
\bysame, {\em On nilpotent commuting varieties and cohomology of Frobenius kernels}, J. Algebra, \textbf{425} (2015), 65--84.

\bibitem[NS]{NS:2014}
N. V.~Ngo and K.~\v{S}ivic, {\em On varieties of commuting nilpotent matrices}, Linear Algebra Appl., \textbf{452} (2014), 237--262.

\bibitem[Pr]{Pr:2003}
A.~Premet, {\em Nilpotent commuting varieties of reductive Lie algebras}, Invent. Math., \textbf{154} (2003), 653--683.


\bibitem[V]{V:1977}
E.~Viehweg, {\em Rational singularities of higher dimensional schemes}, Proceedings of AMS, \textbf{63} (1977), 6--8.

\bibitem[VX]{VX}
K.~Vilonen and T.~Xue, {\em The null-cone and cohomology of vector bundles on flag manifolds}, \href{http://arxiv.org/pdf/1505.07619.pdf}{http://arxiv.org/pdf/1505.07619.pdf}.

\end{thebibliography}
\end{document}